\documentclass[12pt,english,reqno]{amsart}
\usepackage{amsmath}
\usepackage{amsthm}
\usepackage{color}
\usepackage{amssymb}
\usepackage{textcase}
\usepackage{amscd}
\usepackage{pdfsync}
\usepackage{amsfonts}
\usepackage{graphics}
\usepackage{cancel}
\long\def\salta#1{\relax}  
\date{}


\newlength{\defbaselineskip}
\setlength{\defbaselineskip}{\baselineskip}

\theoremstyle{plain}
\newtheorem{theorem}{Theorem}[section]
\newtheorem{proposition}[theorem]{Proposition}

\theoremstyle{definition}
\newtheorem{definition}[theorem]{Definition}

\newtheorem{remark}[theorem]{Remark}
\newtheorem{example}{Example}
\newtheorem{op}[theorem]{Open Problem}
\theoremstyle{remark}

\newcommand{\re}{\mathbb{R}}

\newcommand{\elle}[1]{L^{#1}(\Omega)}
\def\rn{\mathbb{R}^{N}}

\def\be{\begin{equation}}
\def\ee{\end{equation}}
\def\rife#1{(\ref{#1})}

\def\t1p0{T^{1,p}_{0}(\Omega)}

\def\m2{M^{\frac{N(p-1)}{N-1}}(\Omega)}

\def\div{\text{div}}
\def\rn{\mathbb{R}^{N}}

\def\into{\int_{\Omega}}

\def\w-1p'{W^{-1,p'}(\Omega)}
\def\pw-1p'u{L^{p'}(0,1;W^{-1,p'}(\Omega))}

\def\dys{\displaystyle}

\def\lp'n{(L^{p'}(\Omega))^{N}}

\def\mis{\text{\rm{meas}}}

\def\lio{L^{\infty}(\Omega)}
\def\huz{H^1_0 (\Omega)}

\def\bl{\color{blue}}
\def\bk{\color{black}}

\long\def\salta#1{\relax}


\author[F. Petitta]{Francesco Petitta}

\address[F. Petitta]{Dipartimento di Scienze di Base e Applicate
per l' Ingegneria, ``Sapienza", Universit\`a di Roma, Via Scarpa 16, 00161 Roma, Italy.}\email{francesco.petitta@sbai.uniroma1.it}

\thanks{The author has been partially supported by the PNPGC project, references MTM2008--03176}

\begin{document}

\begin{abstract}

In this paper we study existence and nonexistence of solutions for a Dirichlet boundary value problem whose model is
$$
\begin{cases}
\dys -\sum_{m=1}^{\infty}  a_m \Delta u^m= f&\text{in}\ \Omega \\
u=0 & \text{on}\ \partial\Omega\,,
\end{cases}
$$
where $\Omega$ is a bounded domain of $\rn$, $a_m$ is a sequence of nonnegative real numbers, and  $f$ is  in $L^q(\Omega)$, $q>\frac{N}{2}$. 
\end{abstract}

\title[On a generalized porous medium equation]{A generalized porous medium equation related to some singular quasilinear problems}
 \maketitle
\tableofcontents
\section{Introduction}

Let $\Omega$ be a bounded open set of $\rn$, $N\geq 1$,  let $a_m$ be a sequence of nonnegative real numbers, and  consider, as a model, the following Dirichlet boundary value problem  
\begin{equation}\label{1}
\begin{cases}
\dys -\sum_{m=1}^{\infty}  a_m \Delta u^m= f&\text{in}\ \Omega \\
u=0 & \text{on}\ \partial\Omega\,,
\end{cases}
\end{equation}
where $f$  is a nonnegative function in $L^q(\Omega)$, $q>\frac{N}{2}$. 

This  problem contains, at least formally,  the features of the so called \emph{filtration equation} 
 or \emph{Generalized Porous Medium Equation} (GPME). The literature about 
GPME is huge, and we refer to \cite{va} (and the references therein) for a wide account
  on this topic as well as the main possible applications.  
Only recall that a filtration equation is, roughly speaking,
 a problem involving an operator of the type $\Delta \psi (u)$ with $\psi'(s)>0$.

Although the general case could be definitely taken into account, for the sake of exposition we will only consider positive data $f$. The reason is twofold; it allows to deal with nonnegative solutions and, on the other hand, it is the right framework for most of the concrete cases.

However, our first motivation in the study of this problem was a purely mathematical one. More precisely, we started from a \emph{formal connection}, in the case $a_m=\frac1m$, with the following elliptic boundary value problem 
\begin{equation}\label{2}
\begin{cases}
\dys-\div\left(\frac{\nabla v}{1-v}\right)=f & \mbox{in}\,\Omega,\\
\hfill v=0 \hfill & \mbox{on}\,\partial\Omega,
\end{cases}
\end{equation}
studied in \cite{o} (see Example \ref{ese1} below). 

\salta{
\medskip

In fact, let $a_m = \frac{1}{m(2m-1)}$. Let  $f\in \lio$ and suppose we have a solution $u$ to problem \rife{1}. Suppose moreover that $0\leq u< 1$. Then,  
let us \emph{formally} multiply  equation in \rife{1} by $u$. By standard computations we easily get
$$
\sum_{m=0}^\infty \frac{1}{{(2m+1)
^2}}\into {|\nabla u^{2m+1}|^2} \leq \|f\|_\infty.
$$ 
On the other hand let us multiply the equation in \rife{2} by $v$ ($0\leq v<1$ ) and let us use the standard expansion $\frac{v}{1-v}=\sum_{m=1}^{\infty} v^m $. We obtain 
$$
\into |\nabla v|^2 +\sum_{m=1}^{\infty}\into v^m |\nabla v|^2 \leq \|f\|_\infty,
$$
which implies by easy manipulations,
$$
\sum_{m=0}^\infty \frac{1}{{(2m+1)
^2}}\into {|\nabla v^{2m+1}|^2} \leq \|f\|_\infty.
$$ 

So that, the two problems turn out to share some very peculiar estimates. In \cite{blop} the authors proved that problem \rife{2} does not admit a solution if the datum has a size too large.

As a consequence of our results (see theorem \ref{noex} later) we will obtain an elementary proof of the following

\begin{theorem} Let $f\in \lio$. \label{cong}
Then, there exists a positive number $\Lambda$, such that problem 
\begin{equation}
\begin{cases}
\dys -\sum_{m=1}^{\infty}  \frac{1}{m(2m-1)} \Delta u^m= f &\text{in}\ \Omega \\
u=0 &\text{on}\ \partial\Omega,
\end{cases}
\end{equation}
does not admit any weak solution for $\|f\|_{\lio} >\Lambda$.
\end{theorem} 
}

Notice that the fact that the solution should be (strictly) less than $1$ for problem \rife{2} is, let us say,   \emph{structured}
in the problem because of the presence of the singularity at $v=1$. On the other hand,  in problem \rife{1} this singularity is, in some sense,  hidden by 
the presence of the \emph{infinite sum of nonlinear slow diffusions}. Anyway the singular bound can still be there as 
we will  show later (see Proposition \ref{pro} below).  Just recall that the term \emph{slow diffusion} comes from the fact that, for instance in the model case $a_m\equiv 1$,  by trivial computations, we are lead to the following degenerate elliptic operators
$$
-\div(m u^{m-1}\nabla u),
$$
and the \emph{degenerate coefficient} (and then the diffusion) near  $u=0$ becomes \emph{smaller} for $m>1$ (and smaller and smaller as $m$ grows).

\medskip


\medskip

We will prove both existence and nonexistence results for weak solutions to problem \rife{1} depending on the character of the sequence $a_m$ and in particular on the radius of convergence $\sigma$ of the associated power series, defined by 
\be\label{rc}
\limsup_{m\to+\infty} \sqrt[m]{a_{m}}=\frac{1}{\sigma},
\ee
where also the extreme cases $\sigma=0$ and $\sigma=+\infty$ are allowed when 
the above limit is respectively $+\infty$ or $0$. As it will be clear in a while, to have the radius of 
convergence $\sigma$ defined as in \rife{rc} would formally correspond to have 
a singularity at $v=\sigma$ in an associated singular elliptic problem of the type \rife{2}.

We will also assume, and this will be essential in our analysis, that infinitely 
many $a_m$ are different from zero, and that, in particular, $a_1\neq 0$.

Roughly speaking, we prove nonexistence of a solution in $\huz$ if 
\be\label{rog}
\sum_{m=1}^\infty a_m \sigma^m<+\infty,
\ee
and the size of the datum is large. 
In fact, this result can be interpreted as a \emph{formal counterpart} of the results in \cite{o}, 
where the author proved,  that, if $h(s)\geq\alpha>0$ is a continuous function  in $[0,\sigma)$, then problem 
$$
\begin{cases}
 -\div( h(v)\nabla v)= f &\text{in}\ \Omega \\
v=0 & \text{on}\  \partial\Omega\,,
\end{cases}
$$
admits a weak solution for any data if 
$$
H(\sigma)=\int_0^\sigma  h(s)ds=+\infty\,.$$

On the other hand if \rife{rog} is not in force  we can find a solution for any data.  
We will also see that a solution to problem \rife{1}  does exist, no matter of \rife{rog}, 
if the size of the datum is small enough. Finally notice that the assumption  $h(s)\geq\alpha>0$ in \cite{o} 
has $a_1\neq 0$ as a counterpart in our case.

\subsection{Two guide examples}

To better understand the problem let us show what happens in two concrete examples. 

\begin{example}\label{ese1}
Consider the following singular elliptic problem 
\begin{equation}\label{eqese1}
\begin{cases}
\dys -\div\left(\frac{\nabla v}{1-v}\right)=f & \mbox{in}\,\Omega,\\
\hfill v=0 \hfill & \mbox{on}\,\partial\Omega.
\end{cases}
\end{equation}
In \cite{o} it is proved that, for $f\in L^q (\Omega)$, $q>\frac{N}{2}$, there exists a constant 
$0<\theta<1$ and a nonnegative function $v\in \huz$ such that $v$ solves \rife{eqese1} in the weak sense
 and $0\leq v<\theta$ a.e. on $\Omega$. In other words the solution is not affected by the presence of the singularity.  
So that we are allowed to expand in power series the term $\frac{1}{1-v}$ in the weak formulation of \rife{eqese1} to 
get 
$$
\sum_{m=1}^{\infty} \frac{1}{m} \into \nabla v^m\cdot\nabla \varphi=\into f\varphi,
$$
for any $\varphi\in \huz$. Notice that the radius of convergence of the power series associated to $a_m=\frac{1}{m}$  is $1$ and, obviously,   
$$
\sum_{m=1}^{\infty} a_m=+\infty.
$$
\end{example}

\begin{example}\label{ese2}
Let us now turn our attention to nonnegative solutions for the following singular elliptic problem
\begin{equation}\label{eqese2}
\begin{cases}
-\div((1-\log(1-v)){\nabla v})=f & \mbox{in}\,\Omega,\\
\hfill v=0 \hfill & \mbox{on}\,\partial\Omega.
\end{cases}
\end{equation}
In \cite{o} it is proved that a weak solution to problem \rife{eqese2} does exist if the size of the  datum  $f$ is small. Let us consider the formal power series of the logarithm for $0<v<1$. We are led to 
\begin{equation}\label{loga}
\begin{cases}
\dys -\Delta v -\sum_{m=2}^{\infty}  \frac{1}{m(m-1)} \Delta v^m= f &\text{in}\ \Omega \\
v=0 &\text{on}\ \partial\Omega.
\end{cases}
\end{equation}
Again, if we define $a_1=1$ and $a_m=\frac{1}{m(m-1)}$ for $m>1$, then the radius of convergence of the associated power series is $1$ but now 
$$
\sum_{m=1}^{\infty} a_m <+\infty.
$$
We will prove, in Section \ref{no}, that problem \rife{loga} does not admit any weak solution if the size of the datum $f$ (i.e. $\|f\|_{L^q (\Omega)}$) is large enough.
\end{example}

\medskip

The plan of the paper is as follows: in the next section we provide our general assumptions
 and we state the main results by also showing some useful a priori properties of the  
 solutions we are concerned with. In Section \ref{no} we prove the nonexistence result under assumption \rife{rog} if the size of the datum is large enough. Section \ref{sec4} will be devoted to the proof of  existence of a
   solution if either \rife{rog} is not in force or the size of datum $f$ is small enough. Finally, in the last section we study the behavior of the approximating sequence of solutions when existence (for the limit problem) does not hold.

\medskip

{\bf Notation.} In this paper the symbol  $C$, if not explicitly stressed, will denote any positive constant which depends on the data of the problem but never on the solution itself. Moreover, the value of $C$ may change line by line.

\section{General assumptions and statement of the main results}

Let $\Omega$  be a bounded open set of $\rn$, $N\geq 1$. Moreover, 
let $A :\rn \mapsto \mathcal{M}^{N\times N}$ be a symmetric matrix satisfying the following
 standard assumptions: there exist two positive constants $0<\alpha\leq\beta$ such that 
\begin{equation}\label{a1}
 \alpha|\xi|^2\leq A (x)\xi\cdot\xi, \ \ \text{a.e. on} \ \Omega, \forall \xi\in\rn, \xi\neq 0\,, 
\end{equation}
and 
\begin{equation}\label{a2}
|A (x)|\leq \beta, \ \ \text{a.e. on} \ \Omega\,.
\end{equation}
We then define 
$$
A_m (x) :=a_m A(x),
$$
where  $\{a_m\}$ is a sequence  of nonnegative real numbers with $a_1\neq 0$.
 We also assume that $a_m\neq 0 $ for infinitely many $m>1$. Occasionally, without loss 
of generality, we shall be allowed to suppose that \emph{all} of the  $a_m$ are strictly positive.
\bk 

We are interested  in the following Dirichlet boundary value problem  
\begin{equation}\label{main}
\begin{cases}
\dys -\sum_{m=1}^{\infty}  \div (A_m (x) \nabla u^m)= f &\text{in}\ \Omega \\
u=0 & \text{on}\ \partial \Omega,
\end{cases}
\end{equation}
with $f$  a nonnegative function in $L^q(\Omega)$, $q>\frac{N}{2}$.  Notice that, if $A (x)= I$, we are led  the model problem \rife{1}.

Let us state what we mean for weak solution to problem \rife{main}. As suggested by Example \ref{ese1}, the natural way to define it is the following

\begin{definition}\label{def} 
We say that a nonnegative function $u\in \huz\cap\lio$ is a weak solution for problem \rife{main} if 
\begin{equation}\label{defe}
\sum_{m=1}^{\infty} \into A_m (x)\nabla u^m\cdot\nabla \varphi=\into f\varphi,
\end{equation}
for any $\varphi\in \huz$. 
\end{definition}

\begin{remark}
Let us observe that, if $m>1$ and $u\in\huz\cap\lio$, then also $u^m$ is in $\huz$, so that all the terms in \rife{defe} are well defined.  The request of boundedness on $u$ is natural since we deal with data $f$ in $L^q (\Omega)$, $q>\frac{N}{2}$ (see \cite{s}). 

Also notice that, if $0\leq u\leq\gamma< \sigma$ (where $\sigma$ is as in \rife{rc}), then the left hand side of \rife{defe} is always finite. Indeed,  we have
$$
\begin{array}{l}
\dys\left|\sum_{m=1}^{\infty}  \into A_m (x) \nabla u^m\cdot\nabla \varphi\right|\\
\dys =\left|\sum_{m=1}^{\infty}  m\into A_m (x) u^{m-1}\nabla u\cdot\nabla \varphi\right| \leq \beta \sum_{m=1}^{\infty} a_m m\gamma^{m-1}\into|\nabla u\cdot\nabla \varphi|\\\\
\dys\leq C\sum_{m=1}^{\infty} a_m m \gamma^{m-1}<+\infty.
\end{array}
$$
\bk
\end{remark}

On the other hand, as a peculiarity of these problem we notice that a solution  in $\huz$ is not expected to exist with $\mis(\{u>\sigma\})>0$.  
More precisely we have

\begin{proposition}\label{pro}
Let $f\in L^q (\Omega)$, $q>\frac{N}{2}$ be a nonnegative function. Then any weak solution  of \rife{main} satisfies
$$
0\leq u\leq \sigma\ \ \text{a.e. on}\ \Omega\,.
$$ 
\end{proposition}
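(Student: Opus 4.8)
The plan is to prove the two inequalities separately. The lower bound $u\geq 0$ is built into the very definition of weak solution, so the whole content lies in the upper bound $u\leq\sigma$; when $\sigma=+\infty$ there is nothing to prove, hence I assume $\sigma<+\infty$. I would argue by contradiction, supposing $\mis(\{u>\sigma\})>0$, and the engine of the proof is a single well-chosen test function, namely $\varphi=u$ itself, which is admissible since $u\in\huz\cap\lio$.

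First I would insert $\varphi=u$ into the weak formulation \rife{defe}. Writing $\nabla u^m=m\,u^{m-1}\nabla u$, the $m$-th term becomes $m\,a_m\into u^{m-1}A(x)\nabla u\cdot\nabla u$, which by the ellipticity \rife{a1} is nonnegative and, since $u\in\lio$, finite for each $m$. Because $u$ is an admissible test function, the definition guarantees that this series of nonnegative numbers converges, its sum being the finite quantity $\into fu$. Tonelli's theorem then permits exchanging sum and integral, yielding
\[
\into\Big(A(x)\nabla u\cdot\nabla u\Big)\sum_{m=1}^{\infty}m\,a_m u^{m-1}\,dx=\sum_{m=1}^{\infty}m\,a_m\into u^{m-1}A(x)\nabla u\cdot\nabla u<+\infty,
\]
so that the nonnegative integrand on the left is finite for a.e. $x\in\Omega$.

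Now comes the key point. The derived power series $\sum_{m=1}^{\infty}m\,a_m s^{m-1}$ has the same radius of convergence $\sigma$ as $\sum_{m=1}^{\infty}a_m s^{m}$, and, having nonnegative coefficients, it diverges to $+\infty$ for every real $s>\sigma$. Hence at a.e. point of $\{u>\sigma\}$ the factor $\sum_{m=1}^{\infty}m\,a_m u^{m-1}$ equals $+\infty$, which forces $A(x)\nabla u\cdot\nabla u=0$, and then $\nabla u=0$ by ellipticity. Thus $\nabla u=0$ a.e. on $\{u>\sigma\}$. Finally I would note that $(u-\sigma)^{+}\in\huz$ with $\nabla (u-\sigma)^{+}=\nabla u\,\chi_{\{u>\sigma\}}=0$ a.e. in $\Omega$, so Poincar\'e's inequality gives $(u-\sigma)^{+}\equiv 0$, i.e. $u\leq\sigma$ a.e., contradicting $\mis(\{u>\sigma\})>0$. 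The individual steps are routine; the one requiring care is precisely the passage forcing $\nabla u=0$ on $\{u>\sigma\}$, which hinges on factoring the $m$-independent quantity $A(x)\nabla u\cdot\nabla u$ out of the sum and on the elementary fact that a power series with nonnegative coefficients diverges strictly beyond its radius of convergence. I would stress that this reasoning is uniform in the two regimes \rife{rog} and its negation, and also settles the degenerate case $\sigma=0$, where it yields $u\equiv 0$.
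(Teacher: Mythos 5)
Your proof is correct, and while it starts from the same seed as the paper's --- testing \rife{defe} with $\varphi=u$ --- the execution is genuinely different. The paper never sums the series globally: it drops all terms except the $n$-th, obtaining the single-term bound $n\,a_n\into u^{n-1}|\nabla u|^2\leq C\|f\|^2_{L^q(\Omega)}$ (the constant coming from a preliminary $\huz$ estimate extracted from the first term), then fixes an auxiliary level $M>\sigma$, estimates $\into u^{n-1}|\nabla u|^2\geq M^{n-1}\int_{\{u\geq M\}}|\nabla u|^2$, and sends $n\to\infty$ along the subsequence realizing $\limsup_n\sqrt[n]{a_n}=1/\sigma$, so that $n\,a_n M^n\to+\infty$, forcing $\int_{\{u\geq M\}}|\nabla u|^2=0$ and then $u\leq M$; finally $M\downarrow\sigma$. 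You instead keep the entire series, swap sum and integral by Tonelli, and invoke the pointwise divergence of the derived series $\sum_m m\,a_m s^{m-1}$ for $s>\sigma$ to annihilate $A\nabla u\cdot\nabla u$ a.e.\ on $\{u>\sigma\}$ in one stroke. Your version buys cleanliness: no subsequence and liminf bookkeeping, no auxiliary $M$, no quantitative $\huz$ bound (mere finiteness of $\into fu$ suffices since $u\in\lio$ and $f\in L^1(\Omega)$), automatic handling of vanishing coefficients, and you spell out the concluding truncation-plus-Poincar\'e step that the paper leaves implicit. What the paper's term-by-term route buys is quantitative, reusable information: essentially the same computation, with a Chebyshev-type step in place of your pointwise divergence, is recycled in Proposition \ref{stime} and in the final section for the approximating solutions $u_n$, where only the finite sum $Q_n$ is available and your global Tonelli argument would have nothing to sum; there it yields the decay $\mis(\{u_n\geq M\})\leq Cn/(a_n M^n)$ underpinning the flat-zone analysis. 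Two cosmetic points: your a.e.\ finiteness step tacitly uses the convention $0\cdot\infty=0$ (harmless, since wherever $A\nabla u\cdot\nabla u>0$ on $\{u>\sigma\}$ the product is $+\infty$, which is exactly the contradiction you need), and your opening ``by contradiction'' framing is redundant, as you in fact prove $u\leq\sigma$ directly.
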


\bk 

Thanks to the previous result we have a precise picture of the situation which is summarized 
in the result below 
\begin{theorem}\label{th}
Let $0\leq \sigma\leq \infty$.  Then a weak solution to problem \rife{main} does exist for any $f\in L^q(\Omega)$, $q>\frac{N}{2}$, if and only if 
\begin{equation}\label{cond}
\sum_{m=1}^\infty a_m \sigma^m =+\infty
\end{equation}
\end{theorem}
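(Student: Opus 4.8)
The plan is to \emph{linearize} the series operator through the power series it encodes, and then to read off existence from whether the datum can be fit into the range of that series. Set $\psi(s)=\sum_{m=1}^\infty a_m s^m$ and $h(s)=\psi'(s)=\sum_{m=1}^\infty m\,a_m s^{m-1}$, both analytic on $[0,\sigma)$ with the common radius of convergence $\sigma$. Since $h(s)\ge a_1>0$, the map $\psi$ is a strictly increasing bijection of $[0,\sigma)$ onto $[0,L)$, where
\be
L:=\lim_{s\to\sigma^-}\psi(s)=\sum_{m=1}^\infty a_m\sigma^m\in(0,+\infty]
\ee
is exactly the quantity appearing in \rife{cond}. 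The heuristic is that, for a bounded $u$ valued in $[0,\sigma)$, one has $\sum_{m=1}^\infty a_m\nabla u^m=h(u)\nabla u=\nabla\psi(u)$, so that \rife{main} becomes the \emph{linear} problem $-\div(A(x)\nabla\psi(u))=f$. Writing $w=\psi(u)$, the whole question reduces to whether the (unique) solution $w$ of this linear problem stays in the range $[0,L)$ of $\psi$.

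For the sufficiency ($L=+\infty$) I would first solve the linear Dirichlet problem $-\div(A(x)\nabla w)=f$ in $\huz$; since $f\ge 0$ and $f\in L^q(\Omega)$ with $q>\frac N2$, the maximum principle together with the Stampacchia $L^\infty$-bound (cf. \cite{s}) provide a unique $w\in\huz\cap\lio$ with $w\ge 0$ and $\|w\|_{\lio}\le C\|f\|_{L^q(\Omega)}$. Because $L=+\infty$, the inverse $\psi^{-1}$ is defined on all of $[0,+\infty)$, so I set $u:=\psi^{-1}(w)$. The crucial point is that $\|u\|_{\lio}=\psi^{-1}(\|w\|_{\lio})=:\gamma<\sigma$ \emph{strictly}; as $\psi^{-1}$ is Lipschitz on $[0,\|w\|_{\lio}]$ (its derivative is $1/h\le 1/a_1$), the composition $u$ lies in $\huz\cap\lio$ with $\nabla u=\nabla w/h(u)$. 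Since $u\le\gamma<\sigma$, the series $h(u)=\sum_m m\,a_m u^{m-1}$ converges uniformly (it is dominated by $\sum_m m\,a_m\gamma^{m-1}<\infty$), the partial sums of $\sum_m a_m u^m$ converge in $\huz$ to $\psi(u)=w$, and the chain rule yields $\sum_{m=1}^\infty a_m\nabla u^m=h(u)\nabla u=\nabla w$. Testing against any $\varphi\in\huz$ then turns \rife{defe} into $\into A(x)\nabla w\cdot\nabla\varphi=\into f\varphi$, which holds by construction; hence $u$ is a weak solution for \emph{every} such datum.

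For the necessity I argue by contraposition: assuming $L<+\infty$, I exhibit a datum admitting no solution. Let $\phi\in\huz\cap\lio$ solve the torsion-type problem $-\div(A(x)\nabla\phi)=1$ in $\Omega$, $\phi=0$ on $\partial\Omega$; by the strong maximum principle $\phi>0$ in $\Omega$. If $u$ were a weak solution of \rife{main}, I would use $\phi$ as test function in \rife{defe}. Exploiting the symmetry of $A$ and the weak formulation of the torsion problem, each term satisfies $\into A_m(x)\nabla u^m\cdot\nabla\phi=a_m\into u^m$, so summing and applying monotone convergence (every summand is nonnegative) gives
\be
\into f\,\phi=\sum_{m=1}^\infty a_m\into u^m=\into\psi(u).
\ee
By Proposition \ref{pro} we have $0\le u\le\sigma$ a.e., whence $\psi(u)\le\psi(\sigma)=L$ and therefore $\into f\,\phi\le L\,\mis(\Omega)$. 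This bound does not depend on $u$, so choosing $f$ large enough that $\into f\,\phi>L\,\mis(\Omega)$ (for instance a large constant, using $\phi>0$) rules out any weak solution, and existence for \emph{all} $f$ is seen to force $L=+\infty$.

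The part I expect to be delicate is the strict inequality $\|u\|_{\lio}<\sigma$ in the sufficiency step, since it is precisely what keeps the diffusion $h(u)$ bounded and legitimizes the term-by-term chain rule and the $\huz$-convergence of $\sum_m a_m u^m$; this is exactly where the hypothesis $L=+\infty$ (equivalently \rife{cond}) enters, as it guarantees that the a priori unbounded linear solution still maps, through $\psi^{-1}$, into a set compactly contained in $[0,\sigma)$. In the borderline regime $u=\sigma$ one must be careful that $h(\sigma)$ may diverge even when $L=\psi(\sigma)<+\infty$; this subtlety belongs only to the nonexistence branch, where it is harmless, because the identity above uses $\psi$ and the nonnegativity of the terms, never $h$.
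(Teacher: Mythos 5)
Your argument is correct, and it diverges from the paper's proof in both halves in instructive ways. For the existence direction, the paper never inverts the full series: it solves the same linear problem $-\div(A(x)\nabla v)=f$, sets $u_n=Q_n^{-1}(v)$ with the \emph{partial} sums $Q_n(s)=\sum_{m=1}^n a_m s^m$, establishes the uniform estimates of Proposition \ref{stime}, and then invokes Abel's theorem to produce $\gamma<\sigma$ with $\sum_{m} a_m\gamma^m>c\|f\|_{L^q(\Omega)}$, which forces $\|u_n\|_{\lio}\leq\gamma$ for $n$ large and allows passage to the limit by dominated convergence on the counting measure. Your direct definition $u=\psi^{-1}(w)$ carries out the same change of variables in one stroke, and your verification is sound: $\psi^{-1}$ is Lipschitz because $h\geq a_1>0$, and the strict bound $\psi^{-1}(\|w\|_{\lio})=\gamma<\sigma$ — which is precisely the paper's Abel step in disguise — legitimizes the term-by-term chain rule and the $\huz$-convergence of $\sum_m a_m u^m$, so \rife{defe} follows. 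What the paper's approximation scheme buys in exchange for its extra length is reusability: the same $u_n$ yield the small-data result (Theorem \ref{piccolo}) and the flat-zone analysis of the last section, which an exact inversion cannot reach when the sum in \rife{cond} is finite. For the nonexistence direction your route is genuinely different: the paper tests \rife{defe} with the first eigenfunction $\varphi_1(A,f)$ of the weighted problem $-\div(A(x)\nabla\varphi)=\lambda f\varphi$ and derives, via Proposition \ref{pro}, the contradiction $\lambda\leq K\lambda_1(A,f)$ with $K$ as in \rife{kappa}, i.e.\ a threshold along every ray $\lambda f$ and the explicit criterion $\lambda_1(A,f)<1/K$; you instead test with the torsion function $\phi$ and obtain $\into f\phi=\into\psi(u)\leq K\,\mis(\Omega)$, ruling out large constant data. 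Your version is more elementary (no weighted eigenvalue theory from \cite{def} is needed, only $\into\phi>0$) and fully sufficient for the ``for all $f$'' formulation of the theorem, at the cost of a cruder description of the nonexistence region. You also correctly isolate the two delicate points: the strictness of $u\leq\gamma<\sigma$ on the existence side, and, on the nonexistence side, the use of $\psi$ together with monotone convergence rather than $h$, since $h$ may blow up at $\sigma$ even when $K<\infty$; and both halves cover the extreme cases, since $\sigma=0$ gives $L=0$ (matching the paper's remark that only $u\equiv 0$ can solve the problem) while $\sigma=+\infty$ forces $L=+\infty$ because $a_1\neq 0$.
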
\bk

\medskip
\begin{remark}
 Some considerations are in order to be done about the two extreme cases 
 $\sigma=0$ and $\sigma=+\infty$. Proposition \ref{pro} allows us to say,
  in the particular case $\sigma=0$ (where \rife{cond} always fails), that any weak solution 
  of problem \rife{main} turns out to be $0$ a.e. on $\Omega$. So that, no nontrivial solutions
   are allowed in this case.  On the other hand,  in the limit case $\sigma=+\infty$ (where \rife{cond} is trivially satisfied) the situation is simpler and,  as we will see, the existence of a solution can be proved. 
\end{remark}
\medskip

  The proof of Theorem \ref{th} will be a consequence of  the two results below. In order to state the nonexistence  result it will be useful to consider an slightly different problem. That is, for fixed $\lambda>0$, we consider
  \begin{equation}\label{mainau}
\begin{cases}
\dys -\sum_{m=1}^{\infty}  \div (A_m (x) \nabla u^m)= \lambda f &\text{in}\ \Omega \\
u=0 & \text{on}\ \partial \Omega,
\end{cases}
\end{equation}
with $f$  a nonnegative function in $L^q(\Omega)$, $q>\frac{N}{2}$ such that $f\not\equiv 0$.
\begin{theorem}\label{noex}
Let 
$$
\sum_{m=1}^{\infty}  a_m \sigma^m<+\infty. 
$$
Then there exists a positive number ${\Lambda_{f}}$, such that problem \rife{mainau} 
does not admit any weak solution if $\lambda>\Lambda_{f}$.

\begin{theorem}\label{esiste}
Let $f\in L^q (\Omega) $, $q>\frac{N}{2}$, and $\{a_m\}$ such that condition \rife{cond} is in force  with $0<\sigma\leq \infty$. Then there exists a weak solution for problem \rife{main}.
\end{theorem}

\end{theorem}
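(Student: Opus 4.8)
The plan is to exploit the hypothesis $\sum_{m=1}^\infty a_m\sigma^m<+\infty$ in order to recognize the entire left-hand side of \rife{mainau} as a \emph{linear} elliptic operator acting on a single, uniformly bounded unknown. Writing $\Phi(s):=\sum_{m=1}^\infty a_m s^m$, we have formally $\sum_m a_m A\nabla u^m=A\nabla\Phi(u)$, so \rife{mainau} reads $-\div(A\nabla w)=\lambda f$ with $w:=\Phi(u)$. The crucial observation is that, by Proposition \ref{pro}, any weak solution satisfies $0\le u\le\sigma$, whence
$$0\le w=\Phi(u)\le\Phi(\sigma)=\sum_{m=1}^\infty a_m\sigma^m=:M<+\infty.$$
Thus $w$ would be a \emph{uniformly bounded} solution of a linear problem with datum $\lambda f$, and since the $L^\infty$-size of such a solution grows linearly in $\lambda$, a contradiction is forced once $\lambda$ is large. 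The role of the standing hypothesis is exactly to guarantee $M<+\infty$.

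To make this rigorous without having to verify that $w\in\huz$, I would test against the \emph{torsion function} of the operator. Let $\zeta\in\huz$ be the unique weak solution of $-\div(A\nabla\zeta)=1$ in $\Omega$, $\zeta=0$ on $\partial\Omega$. Since $1\in L^q(\Omega)$ with $q>\tfrac N2$, Stampacchia's theorem gives $\zeta\in\lio\cap L^\infty(\Omega)$, while the strong maximum principle gives $\zeta>0$ a.e.\ in $\Omega$; as $f\ge0$ with $f\not\equiv0$, this yields $\into f\zeta>0$. Taking $\varphi=\zeta$ in the weak formulation of \rife{mainau} (that is, \rife{defe} with $f$ replaced by $\lambda f$), using that $A$ is symmetric and that $u^m\in\huz$ for every $m$ (see the Remark), I would test the defining identity for $\zeta$ against each $u^m$ to get, term by term,
$$\into a_m A\nabla u^m\cdot\nabla\zeta=a_m\into A\nabla\zeta\cdot\nabla u^m=a_m\into u^m.$$

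Summing over $m$ — all terms being nonnegative, so that Tonelli's theorem legitimizes the exchange of sum and integral — the left-hand side of the weak formulation collapses to $\into\Phi(u)$, and we reach the exact identity $\into\Phi(u)=\lambda\into f\zeta$. The conclusion is then immediate: invoking the bound $\Phi(u)\le M$ established above,
$$\lambda\into f\zeta=\into\Phi(u)\le M\,|\Omega|,$$
so $\lambda\le M|\Omega|/\into f\zeta$. Setting
$$\Lambda_f:=\frac{|\Omega|\,\sum_{m=1}^\infty a_m\sigma^m}{\into f\zeta},$$
which is finite and strictly positive whenever $\sigma>0$ (note $M\ge a_1\sigma>0$), we conclude that no weak solution can exist as soon as $\lambda>\Lambda_f$, which is precisely the claim.

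I expect the two genuinely delicate points to be those that upgrade the scheme from formal to rigorous. First, the strict positivity $\into f\zeta>0$, which rests on the strong maximum principle for the torsion function and is exactly what makes $\Lambda_f$ finite. Second, and more importantly, the argument hinges on Proposition \ref{pro}: it is the a priori confinement $0\le u\le\sigma$ that forces $\Phi(u)$ to stay below the finite value $M=\sum_m a_m\sigma^m$. Without the hypothesis $\sum_m a_m\sigma^m<+\infty$ one would have $\Phi(\sigma)=+\infty$, the boundedness of $w$ would fail, and with it the contradiction — in full agreement with the existence Theorem \ref{esiste}, which covers precisely the opposite regime \rife{cond}.
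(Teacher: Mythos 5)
Your proof is correct, but it takes a genuinely different route from the paper's. The paper proves Theorem \ref{noex} by testing the weak formulation of \rife{mainau} with the positive first eigenfunction $\varphi_1(A,f)$ of the weighted eigenvalue problem \rife{la}: the symmetry of $A$ converts each term into $\lambda_1(A,f)\,a_m\into f\,\varphi_1\, u^m$, and the confinement $0\leq u\leq \sigma$ of Proposition \ref{pro} then gives $\lambda \leq K\lambda_1(A,f)$ with $K=\sum_{m} a_m\sigma^m$, i.e.\ $\Lambda_f= K\lambda_1(A,f)$. You test instead with the torsion function $\zeta$ solving $-\div(A(x)\nabla \zeta)=1$, and the same symmetry trick, term by term, plus Tonelli (legitimate, since after the conversion every summand $a_m\into u^m$ is nonnegative, so the partial sums converge monotonically to $\lambda\into f\zeta$) yields the exact identity $\into \Phi(u)=\lambda \into f\zeta$, whence $\lambda \leq K|\Omega|/\into f\zeta$ by $\Phi(u)\leq \Phi(\sigma)=K$. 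The two arguments share their essential ingredients --- Proposition \ref{pro}, the symmetry of $A$ to throw the operator onto a fixed test function, and the positivity of that test function (for your $\zeta$ this is the strong maximum principle for operators with merely measurable coefficients, exactly parallel to the positivity of $\varphi_1$ that the paper quotes from \cite{def}) --- but yours dispenses with the weighted eigenvalue theory altogether, and your central relation is an equality rather than an estimate, which is arguably cleaner. What the paper's choice buys is the spectral threshold $K\lambda_1(A,f)$, which directly powers the Remark following the proof (nonexistence at $\lambda=1$ as soon as $\lambda_1(A,f)<1/K$); your constant $K|\Omega|/\into f\zeta$ is not comparable to it in general but serves the stated theorem equally well. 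Two minor points: your ``$\zeta\in\lio\cap L^\infty(\Omega)$'' should read $\zeta\in \huz\cap\lio$; and in the degenerate case $\sigma=0$ your formula gives $\Lambda_f=0$, but there your identity forces $\lambda\into f\zeta=0$, impossible for every $\lambda>0$, so any positive $\Lambda_f$ works (the paper's threshold degenerates in the same way). Finally, note that the statement as printed accidentally nests Theorem \ref{esiste} inside Theorem \ref{noex}; like the paper's proof at this point, you address only the nonexistence claim, the existence half being established separately in Section \ref{sec4}.
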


\section{Nonexistence for large data}\label{no}

Let us start this section by proving Proposition \ref{pro}. 
\begin{proof}[Proof of Proposition \ref{pro}]\bl
 \bk
Let $u$ be a weak solution to problem \rife{main}. The solution is nonnegative by definition. We want to show that $u\leq \sigma$. 
\salta{
First we multiply by $G_k (u)$, we drop all the terms but the first one  to get 
 $$
 a_1 \into |\nabla G_k(u)|^2 \leq \into f G_k (u),
 $$
which leads, by Stampacchia's theorem (see \cite{s}), to 
$$
\|u\|_{\lio}\leq C\|f\|_{L^q(\Omega)}.
$$
}

We use $u$ \rife{defe}, we drop all the positive terms but the first one to get, 
using \rife{a1},   H\"older  and Sobolev inequalities, 
$$
\|u\|_{\huz}\leq C \|f\|_{L^q(\Omega)},$$ 
where the constant $C$ only depends on $\alpha, a_1$ and $|\Omega|$. 

 On the other hand, again by choosing $u$ as test function, we now  drop 
 all the positive terms but the $n$-th, to get, reasoning as before and using  the previous inequality
$$
a_n n\into u^{n-1}|\nabla u|^2 \leq C \|f\|_{L^q(\Omega)}^2.\bk
$$
Now, for fixed $M>\sigma$,  we have 
$$
\into u^{n-1}|\nabla u|^2\geq M^{n-1} \int_{\{u\geq M\}} |\nabla u |^2, 
$$
so that, for infinitely many $n$, we have  
$$
\int_{\{u\geq  M\}} |\nabla u |^2\leq C\frac{\|f\|_{L^q(\Omega)}^2}{n a_n M^n},
$$
which implies, using the definition of $\sigma$ and taking the liminf as $n$ goes to infinity, $u\leq M$, a.e. Due to the arbitrary choice of $M$ we get $0\leq u\leq \sigma$ a.e. on $\Omega$. 

\end{proof}
\bk

Let us turn now to our nonexistence result. We are in position to prove Theorem \ref{noex}. 
We define
\begin{equation}\label{kappa}
\sum_{m=1}^{\infty}  a_m \sigma^m:=K<\infty. 
\end{equation}

\medskip

Let us also  recall  that, for $f\in L^q(\Omega)$ with $q>\frac N2$, and $f\not\equiv 0$, is possible to define (see for instance \cite{def} for further details) the first
positive eigenvalue $\lambda_1(A,f)$ of the weighted  eigenvalue boundary value
problem
\begin{equation*}
\begin{cases}
-\div (A(x)\nabla \varphi) = \lambda\, f\, \varphi & \mbox{in }\Omega,
\\
\hfill \varphi=0 \hfill &\mbox{on } \partial \Omega,
\end{cases}
\end{equation*}
as
\be\label{la}
\dys\lambda_1(A,f)\, := \inf_{\stackrel{v \in H^{1}_{0}(\Omega)}{ v\neq 0} }\frac{\int_\Omega\,A(x)\nabla
v\cdot \nabla v}{\int_{\Omega}\,f\,v^{2}}\, .
\ee
Moreover, the infimum is attained by a positive eigenfunction $\varphi_1 (A,f)$ which also solves the associated Euler-Lagrange equation.

We will use $\lambda_1 (A,f)$  and $\varphi_1 (A,f)$ (or, to simplify the notation,
 $\lambda_1 (f)$  and $\varphi_1 (f)$)  to prove \emph{one side} of Theorem \ref{th} (i.e. Theorem \ref{noex}).

\begin{proof}[Proof of Theorem \ref{noex}]  
Suppose by contradiction that a solution $u$ to problem \rife{mainau} does exist for any $\lambda>0$.
 We take $\varphi_1 (f)$  as test function for \rife{mainau} 
and we get
$$
\sum_{m=1}^{\infty}  a_m\into A(x)\nabla u^m \cdot\nabla \varphi_1(f) =\lambda \into f\varphi_1(f), 
$$
so that by definition, recalling that $A(x)$ is symmetric, we have
$$
\sum_{m=1}^{\infty}  a_m\lambda_1 (f) \into  f \varphi_1(f) u^m =\lambda \into f \varphi_1(f). 
$$

Since  $0\leq u\leq \sigma$ by Proposition \ref{pro}, we can write
$$
\begin{array}{l}
\dys \lambda \into f\varphi_1(f) = \lambda_1 (f)\sum_{m=1}^{\infty}  \into a_m u^mf  \varphi_1\\\\
\dys \leq  \lambda_1(f)\sum_{m=1}^{\infty}  \into a_m \sigma^m  f \varphi_1(f)= K\lambda_1 (f) \into f \varphi_1(f)
\end{array}$$
which yields
$$
(\lambda - K \lambda_1 (f)) \leq 0\,,
$$
 a contradiction if $\lambda > K\lambda_1 (f)$. 
 \end{proof}

\begin{remark} 
Let us come back to problem \rife{main} with a general datum $f\in L^q (\Omega)$, $q>\frac{N}{2}$ 
and $a_m$ satisfying \rife{kappa}. 
The range of nonexistence proven  above can be explicitly characterized in terms of the first 
eigenvalue $\lambda_{1}(A,f)$. In fact, starting from the condition on $\lambda$ found at the 
end of the proof we can choose $\lambda=1$ to show that no solutions do exist if
 $$
 \lambda_1(A, f) <\frac{1}{K}.
 $$  
\end{remark}
 \bk 
 \section{Existence of a solution}\label{sec4}

Now we deal with our existence results.
Recall that we are dealing with problem 
\begin{equation}\label{eg}
\begin{cases}
\dys -\sum_{m=1}^{\infty}  \div (A_m (x) \nabla u^m)= f &\text{in}\ \Omega \\
u=0 & \text{on}\ \partial \Omega.
\end{cases}
\end{equation}

  The argument will be by approximation.  For $s\geq  0 $, we define 
\be\label{qn}
Q_n(s):= \sum_{m=1}^{n} a_m s^m, 
\ee
and we consider for $f\in L^q (\Omega) $, $q>\frac{N}{2}$ the unique weak solutions $v$ of the following problem
\begin{equation}\label{ng}
\begin{cases}
\dys -\div( A(x) \nabla v )= f &\text{in}\ \Omega \\
v=0 &\ \text{on}\  \partial\Omega\,;
\end{cases}
\end{equation} 
then we set $u_n=Q_{n}^{-1}(v)$ which is well defined since $Q_n$ is strictly increasing for $s>0$  and $Q_n(s)\to +\infty$ as $s$ goes to infinity. Moreover it is easy to check that $v\geq 0$, so that also $u_n\geq 0$.\bk
 
Therefore we have 
 \begin{proposition}\label{stime}
 Let $u_n=Q_{n}^{-1}(v)$ where $v$ is the weak solution to problem \rife{ng}. Then there exists $\overline{c}>0$ such that
 \begin{equation}\label{estim}
 \|u_n\|_{\huz}+\|u_n\|_{\lio}\leq \overline{c}\|f\|_{L^q (\Omega)}.
 \end{equation}
 
 In particular, there exists a nonnegative function $u\in\huz$ such that $u_n$ converges 
 to $u$ weakly in $\huz$ and $a.e.$ on $\Omega$, and $\|u\|_{\lio}\leq \sigma$, where $\sigma$ is defined by \rife{rc}. 
 \end{proposition}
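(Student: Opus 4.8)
The plan is to establish the two uniform bounds comprising \rife{estim} separately and then extract a limit by standard compactness. For the $\lio$ bound I would start from the linear problem \rife{ng}: since its datum $f$ lies in $L^q(\Omega)$ with $q>\frac{N}{2}$, Stampacchia's theorem gives $v\in\lio$ together with $\|v\|_{\lio}\le C\|f\|_{L^q(\Omega)}$, while the maximum principle (using $f\ge 0$) yields $v\ge 0$. The crucial elementary observation is then that, since every $a_m\ge 0$ and $a_1\neq 0$, one has $Q_n(s)=\sum_{m=1}^{n}a_m s^m\ge a_1 s$ for all $s\ge 0$; evaluating at $s=u_n$ and using $Q_n(u_n)=v$ gives $a_1 u_n\le v$ pointwise, whence $\|u_n\|_{\lio}\le a_1^{-1}\|v\|_{\lio}\le C a_1^{-1}\|f\|_{L^q(\Omega)}$, uniformly in $n$.

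For the $\huz$ bound I would exploit the identity $\nabla v=Q_n'(u_n)\nabla u_n$. This is legitimate because $Q_n'(s)\ge a_1>0$ for $s\ge 0$, so $Q_n^{-1}$ is Lipschitz with constant $a_1^{-1}$ and vanishes at $0$, forcing $u_n=Q_n^{-1}(v)\in\huz$. Testing the weak formulation of \rife{ng} with $u_n$ and substituting the identity yields
\be
\into Q_n'(u_n)\,A(x)\nabla u_n\cdot\nabla u_n=\into f\,u_n .
\ee
By ellipticity \rife{a1} and $Q_n'(u_n)\ge a_1$ the left-hand side is bounded below by $\alpha a_1\into|\nabla u_n|^2$, while the right-hand side is controlled by $\|f\|_{L^q(\Omega)}\|u_n\|_{\huz}$ through Hölder and Sobolev inequalities (admissible precisely because $q>\frac{N}{2}$). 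Poincaré's inequality then gives $\|u_n\|_{\huz}\le\overline{c}\,\|f\|_{L^q(\Omega)}$, completing \rife{estim}.

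With the uniform bound in hand, $\{u_n\}$ is bounded in the Hilbert space $\huz$, so along a subsequence it converges weakly to some $u\in\huz$; by Rellich's theorem this subsequence converges strongly in $L^2(\Omega)$ and, up to a further extraction, a.e. on $\Omega$, whence $u\ge 0$ a.e. The delicate point — the part I expect to require the most care — is the sharp bound $\|u\|_{\lio}\le\sigma$, which does \emph{not} follow from the crude estimate above. Here I would argue pointwise, combining $Q_n(u_n)=v$ with the boundedness of $v$: if $u(x)>\sigma$ on a set of positive measure, fix $\sigma<s'<u(x)$; since $u_n(x)\to u(x)$ we have $u_n(x)>s'$ for $n$ large, and monotonicity of $Q_n$ gives $v(x)=Q_n(u_n(x))\ge Q_n(s')=\sum_{m=1}^{n}a_m (s')^m$. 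But $s'$ exceeds the radius of convergence $\sigma$ in \rife{rc}, so the nonnegative series $\sum_m a_m (s')^m$ diverges and $Q_n(s')\to+\infty$, contradicting $v(x)<\infty$. Hence $u\le\sigma$ a.e. This reproduces the conclusion of Proposition \ref{pro}, but is considerably simpler, because the explicit identity $Q_n(u_n)=v$ replaces the energy/gradient truncation argument needed there.
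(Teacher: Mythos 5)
Your proof is correct, and while its energy estimate coincides with the paper's, two of its three steps take a genuinely different route. For the $\huz$ bound you and the paper do essentially the same thing: testing \rife{ng} with $u_n$ and keeping only the first-order contribution (your observation $Q_n'(u_n)\geq a_1$ is exactly the paper's ``drop all nonnegative terms but the first''). For the $\lio$ bound the paper runs the Stampacchia truncation on $u_n$ itself, testing with $G_k(u_n)$ and again discarding all terms but the first; you instead apply Stampacchia's theorem once to the linear solution $v$ and transfer the bound through the elementary pointwise inequality $a_1 u_n\leq Q_n(u_n)=v$ --- same outcome, slightly more economical. The genuine divergence is in the bound $\|u\|_{\lio}\leq\sigma$: the paper mimics the proof of Proposition \ref{pro}, keeping only the $n$-th term of the energy identity to get $a_n n\into u_n^{n-1}|\nabla u_n|^2\leq C\|f\|^2_{L^q(\Omega)}$, then combining Poincar\'e and Chebyshev to obtain $\mis(\{u_n\geq M\})\leq C(n+1)^2\|f\|^2_{L^q(\Omega)}/(n a_n M^n)$, which tends to zero along the subsequence of indices realizing the $\limsup$ in \rife{rc}. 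Your pointwise argument --- $Q_n(u_n(x))=v(x)$ is a \emph{fixed} finite value while $Q_n(s')\uparrow+\infty$ for every $s'>\sigma$, by Cauchy--Hadamard and nonnegativity of the $a_m$ --- is simpler and exploits precisely the structural feature (one single $v$, with the explicit identity $u_n=Q_n^{-1}(v)$) that is unavailable in Proposition \ref{pro}, where $u$ is an arbitrary weak solution and the energy route is forced. What the paper's quantitative estimate buys is its reuse in Section 5, where the decay $\mis(\{u_n\geq\delta\})\leq Cn/(\delta^n a_n)$ is invoked for $\sigma=0$; what your argument buys is that it sidesteps the somewhat delicate asymptotic claim that the right-hand side ``behaves as $n(\sigma/M)^n$'', which strictly speaking holds only along a subsequence --- and your mechanism would in fact also give $u_n\to 0$ a.e.\ directly in the $\sigma=0$ case, since $Q_n(\delta)\to+\infty$ for every $\delta>0$.
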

 \begin{proof}
We take $u_n$ as test function in \rife{ng} and we use that $v=Q_n (u_n)$. So that we have
$$
\sum_{m=1}^{n} a_m \into A(x)\nabla u_n^m\cdot\nabla u_n =\into f u_n\,.
$$

 We drop al nonnegative terms but the first, and we get, using both H\"older and Sobolev inequalities
 $$
 \|u_n\|_{\huz}\leq \frac{C}{a_1}\|f\|_{L^q(\Omega)}.
 $$
Now we choose $G_k(u_n)$ as test function in the weak formulation of \rife{ng}. Dropping again all positive terms but the first, one obtain
$$
a_1\alpha \into |\nabla G_k (u_n) |^2\leq \into f G_k (u_n).
$$

So that, by a standard Stampacchia type argument (see \cite{s}) we readily have that there exists a positive $c$, such that
$$
\|u_n\|_{\lio}\leq c\|f\|_{L^q(\Omega)}\,,
$$
which completes the proof of estimate \rife{estim}. In particular there exists $u\in \huz$ such we can extract a  (not relabeled) subsequence $u_n$ convergent to $u$ both $a.e.$ on $\Omega$ and weakly in $\huz$.

 To obtain the bound with respect to $\sigma$ we start reasoning as in the proof of Proposition \ref{pro}. 
 Recall that all but a finite number of $a_m$ are different from zero, so, in the next argument, it will not be restrictive to suppose them \emph{all} different from zero. 

Hence,  we take $u_n$ as test function in \rife{ng} and we  drop all the positive terms but the $n$-th, to get, using the previous inequality  
$$
a_n n\into {u_n}^{n-1}|\nabla u_n|^2 \leq C \|f\|^2_{L^q (\Omega)}.
$$
On the other hand, using Poincar\'e inequality and then Chebyshev inequality, we have 
$$\begin{array}{l}\dys
\into {u_n}^{n-1}|\nabla u_n|^2=\frac{4}{(n+1)^2}\into |\nabla u_n^{\frac{n+1}{2}}|^2\\\\ \dys\geq c_p^p\frac{4}{(n+1)^2}\into |u_n|^{n+1}\geq c_p^p\frac{4 M^{n+1}}{(n+1)^2}\mis(\{u_n\geq M\})\,,\end{array}
$$
for a fixed number $M>\sigma$, and where $c_p$ is the Poincar\'e constant relative to $\Omega$.  Hence
$$
\mis(\{u_n\geq M\})\leq C\frac{(n+1)^2  \|f\|^2_{L^q (\Omega)}}{n a_n M^n},
$$
and, using the definition of $\sigma$, the right hand side behaves as $n(\frac{\sigma}{M})^n$, which implies, using that $\sigma<M$ and taking the liminf as $n$ goes to infinity, $u\leq M$, a.e. Due to the arbitrary choice of $M$ we get $\|u\|_{\lio}\leq \sigma$. 
\bk 
 
 \end{proof}

 \subsection{Existence for small data}
 
 First of all we want to prove that a solution does exist, for every $\{a_m\}$,  if the size of the datum is small enough no matter of the condition \rife{cond} is satisfied or not (cfr. with Theorem \ref{noex}). We have the following
 
 \begin{theorem}\label{piccolo}
 There exists $\overline{\lambda}$ such that, if $\|f\|_{L^q(\Omega)}< \overline{\lambda}$,
  then problem \rife{eg}  has a weak solution $u$ for every $\{a_m\}$ such that $\sigma>0$. 
  Moreover $\|u\|_{\lio}< \sigma$.
 \end{theorem}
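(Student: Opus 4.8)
The plan is to pass to the limit in the approximation scheme of Proposition \ref{stime}, exploiting that a small datum forces the solution to stay strictly below the singular threshold $\sigma$. First I would fix $\overline{\lambda}$ so that $\overline{c}\,\overline{\lambda}<\sigma$, where $\overline{c}$ is the constant from \rife{estim}. Then, whenever $\|f\|_{L^q(\Omega)}<\overline{\lambda}$, the bound \rife{estim} yields a uniform control $\|u_n\|_{\lio}\leq\gamma$ for some $\gamma<\sigma$ independent of $n$, and Proposition \ref{stime} furnishes a nonnegative $u\in\huz$ with $u_n\rightharpoonup u$ weakly in $\huz$ and $u_n\to u$ a.e., so that $\|u\|_{\lio}\leq\gamma<\sigma$. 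This last strict inequality is already the second assertion of the theorem, and it also guarantees $u^m\in\huz$ for every $m$ and that the left-hand side of \rife{defe} is finite.

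It remains to show $u$ solves \rife{defe}. The approximate equation reads $\sum_{m=1}^{n}a_m\into A(x)\nabla u_n^m\cdot\nabla\varphi=\into f\varphi$ for every $\varphi\in\huz$, and writing $\nabla u_n^m=m\,u_n^{m-1}\nabla u_n$ each summand is bounded by $\beta\,a_m m\gamma^{m-1}\|\nabla u_n\|_{L^2(\Omega)}\|\nabla\varphi\|_{L^2(\Omega)}$. Since $\gamma<\sigma$ and the derived series $\sum a_m m s^{m-1}$ shares the radius of convergence $\sigma$ of $\sum a_m s^m$, the tail $\sum_{m=N+1}^{\infty}a_m m\gamma^{m-1}$ is as small as we like; together with the uniform $\huz$-bound of \rife{estim} this makes the series converge uniformly in $n$.

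For each fixed $m$ I would pass to the limit in $m\,a_m\into A(x)u_n^{m-1}\nabla u_n\cdot\nabla\varphi$: by a.e. convergence and the uniform bound $u_n\leq\gamma$, dominated convergence gives $u_n^{m-1}A(x)\nabla\varphi\to u^{m-1}A(x)\nabla\varphi$ strongly in $(L^2(\Omega))^N$, while $\nabla u_n\rightharpoonup\nabla u$ weakly in $(L^2(\Omega))^N$; the pairing of a strongly and a weakly convergent sequence then converges to $m\,a_m\into A(x)u^{m-1}\nabla u\cdot\nabla\varphi=a_m\into A(x)\nabla u^m\cdot\nabla\varphi$. Combining the uniform convergence of the series in $n$ with this termwise convergence, through a standard splitting into a finite sum plus a uniformly small tail, I conclude $\sum_{m=1}^{\infty}a_m\into A(x)\nabla u^m\cdot\nabla\varphi=\into f\varphi$, so $u$ is the desired weak solution.

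The main obstacle is the interchange of the limit $n\to\infty$ with the infinite sum: the whole argument hinges on the strict gap $\gamma<\sigma$ secured in the first step, which is exactly what the smallness of $\overline{\lambda}$ buys and what makes both the uniform tail estimate and the dominated convergence available.
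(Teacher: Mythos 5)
Your proposal is correct and follows essentially the same route as the paper: the same choice $\overline{\lambda}=\sigma/\overline{c}$ from Proposition \ref{stime}, the same uniform bound $\|u_n\|_{\lio}\leq\gamma<\sigma$, termwise passage to the limit for each fixed $m$, and the interchange of limit and sum via the summable majorant $a_m m\gamma^{m-1}$ (your finite-sum-plus-small-tail splitting is just the paper's dominated convergence with respect to the counting measure $\sum_{i=1}^{\infty}\delta_i$, spelled out). Your strong-times-weak pairing argument for the fixed-$m$ limit fills in a detail the paper leaves implicit, but it is the same proof in substance.
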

\begin{proof}
Let $\sigma$ be defined as in \rife{rc}.  Thanks to Proposition \ref{stime} we know that  solutions to problem \rife{ng} satisfy 
$$
\|u_n\|_{\lio}\leq \overline{c}\|f\|_{L^q(\Omega)},
$$
so we choose $\overline{\lambda}$ as
$$
\overline{\lambda}=\frac{\sigma}{\overline{c}}.
$$

Using again Proposition \ref{stime} we know that  $u_n$ converges weakly towards a function 
$u$ in $\huz$ and a.e. on $\Omega$, moreover, thanks to the choice of $\overline{\lambda}$, we have  $\|u\|_{\lio}<\sigma $. In particular, for fixed $m$ we have
$$
\into A(x)  \nabla u_n^m\cdot\nabla \varphi\longrightarrow \into A(x)\nabla u^m\cdot\nabla \varphi\ \ \text{as} \ n\to+\infty,
$$
for any $\varphi\in\huz$. 

Moreover, we have
$$\begin{array}{l}
\dys\sum_{m=1}^{n}a_m  \into A(x)  \nabla u_{n}^{m} \cdot \nabla \varphi\\\\
\dys=\sum_{m=1}^{\infty}a_m m \chi_{\{1\leq m\leq n\}}(m) \into  u_n^{m-1} A(x) \nabla u_n \cdot \nabla \varphi,
\end{array}$$
Where, 
$$\begin{array}{l}
\dys\left| a_m m \chi_{\{1\leq m\leq n\}}(m) \into  u_n^{m-1} A(x) \nabla u_n \cdot \nabla \varphi\right|\\\\
\dys\leq C\beta a_m m \|u_n\|_{\lio}^{m-1}\leq C \beta a_m m \gamma^{m-1},
\end{array}$$
where $C$ only depends on $\overline{c}$ and on the norm of $\varphi$ in $\huz$, and   $\gamma<\sigma$. So we can apply  Lebesgue dominated convergence theorem with respect to the measure $\sum_{i=1}^{\infty}\delta_i$ 
 to pass to the limit  in \rife{ng} and we conclude.
\bk 
\end{proof}
\medskip

 As a consequence of this result we can better characterize the sets of data in which
  we have either existence or nonexistence. As before, to simplify the exposition, we shall consider 
  the following  problem depending on the parameter $\lambda$
 \begin{equation}\label{egl}
\begin{cases}
\dys -\sum_{m=1}^{\infty}  \div (A_m (x) \nabla u^m)= \lambda f &\text{in}\ \Omega \\
u=0 & \text{on}\ \partial \Omega\,,
\end{cases}
\end{equation}
with $A_m(x)$ and $f$ a fixed function in $L^q(\Omega)$, $q>\frac{N}{2}$.

Let us define the following 
$$
\Lambda_f:= \sup\{\lambda: \exists u \ \text{and}\ \gamma,\ \text{with $u$ solution of \rife{egl} }, \ u\leq 
\gamma<\sigma\}
$$
 Notice that $\Lambda_f$ is always strictly positive thanks to Theorem \ref{piccolo}. Now, if 
$$
\sum_{m=1}^{\infty}  a_m \sigma^m<+\infty. 
$$
then, by Theorem \ref{noex}, $\Lambda_f <+\infty$.   On the other hand, we will see in the 
 Section \ref{sub}  that, if the previous condition does not hold, then 
$\Lambda_f =+\infty$ (i.e. Theorem \ref{esiste}).  In any cases we can state the following 
result in which, as before,  the value $\sigma=+\infty$ is allowed.
\begin{theorem}
Let $\{a_m\}$ be such that $\sigma>0$. Then a solution to problem \rife{egl} does exist
 for any   $0<\lambda<\Lambda_f$. 
\end{theorem}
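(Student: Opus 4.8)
The plan is to couple the approximation scheme of Proposition \ref{stime} with a comparison argument resting on the very definition of $\Lambda_f$. Fix $0<\lambda<\Lambda_f$. By definition of the supremum there are a level $\lambda_0$ with $\lambda<\lambda_0\le\Lambda_f$, a weak solution $u_0$ of \rife{egl} at parameter $\lambda_0$, and a constant $\gamma_0<\sigma$ with $u_0\le\gamma_0$ a.e. Writing $Q_\infty(s):=\sum_{m=1}^\infty a_m s^m$ for $0\le s<\sigma$, the bound $u_0\le\gamma_0<\sigma$ guarantees that $w_0:=Q_\infty(u_0)\in\huz$, and the weak formulation of Definition \ref{def} (with datum $\lambda_0 f$) reads exactly $-\div(A(x)\nabla w_0)=\lambda_0 f$. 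Since the solution $v$ of \rife{ng} is the unique weak solution of the linear problem, linearity forces the solution associated to the datum $\lambda f$ to be $\tfrac{\lambda}{\lambda_0}w_0$; I therefore work throughout with $v:=\tfrac{\lambda}{\lambda_0}w_0$, which satisfies the crucial pointwise bound $v\le\tfrac{\lambda}{\lambda_0}Q_\infty(\gamma_0)$ a.e., a constant \emph{strictly} below $Q_\infty(\gamma_0)$ because $\lambda<\lambda_0$.

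Next I run the approximation $u_n:=Q_n^{-1}(v)$ of Proposition \ref{stime} with datum $\lambda f$. As $Q_{n+1}(s)\ge Q_n(s)$ for $s\ge0$ while $Q_n(u_n)=v$ does not depend on $n$, the sequence $\{u_n\}$ is nonincreasing, so the a.e. limit supplied by Proposition \ref{stime} is a monotone one, $u_n\searrow u$. Because $v\le\tfrac{\lambda}{\lambda_0}Q_\infty(\gamma_0)<Q_\infty(\gamma_0)$ a.e., for a.e. $x$ the number $v(x)$ lies in the range of $Q_\infty$ restricted to $[0,\gamma_0)$, on which $Q_\infty$ is a strictly increasing continuous bijection; letting $n\to\infty$ in $Q_n(u_n)=v$ then identifies the limit as $u=Q_\infty^{-1}(v)$ and yields the \emph{uniform} bound $u\le\gamma:=Q_\infty^{-1}\!\big(\tfrac{\lambda}{\lambda_0}Q_\infty(\gamma_0)\big)<\gamma_0<\sigma$ a.e. Securing this strict separation of $u$ from $\sigma$ is the heart of the argument and the main obstacle: the generic estimate of Proposition \ref{stime} only gives $\|u\|_{\lio}\le\sigma$, which is not enough to pass to the limit in the series; it is precisely the barrier $u_0$ sitting at the higher level $\lambda_0$, together with the linearity of \rife{ng}, that upgrades $\le\sigma$ to $\le\gamma<\sigma$.

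With the strict bound $u\le\gamma<\sigma$ in hand I conclude as in the proof of Theorem \ref{piccolo}. The bound $\gamma<\sigma$ makes $\sum_{m=1}^\infty a_m m\gamma^{m-1}$ finite, whence $Q_\infty(u)\in\huz$ with $\nabla Q_\infty(u)=\sum_{m=1}^\infty a_m m\,u^{m-1}\nabla u$; dominated convergence over the counting measure then justifies summing the weak formulation term by term against any $\varphi\in\huz$. Since $u=Q_\infty^{-1}(v)$ gives $Q_\infty(u)=v$ as elements of $\huz$, the identity $-\div(A(x)\nabla v)=\lambda f$ rewrites as $\sum_{m=1}^\infty\into A_m(x)\nabla u^m\cdot\nabla\varphi=\lambda\into f\varphi$ for all $\varphi\in\huz$. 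Hence $u$ is a weak solution of \rife{egl} in the sense of Definition \ref{def}, which proves the theorem; as stressed above, the only delicate point is the strict bound $u\le\gamma<\sigma$, everything else being the routine machinery already used for Theorem \ref{piccolo}.
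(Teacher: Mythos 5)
Your proof is correct, but it follows a genuinely different route from the paper's. The paper argues by comparison: it introduces an auxiliary nonlinearity $\overline{Q}$, equal to the full power series for $s\le\gamma$ and extended so as to be increasing and surjective on $\re^{+}$, solves $-\div(A(x)\nabla\overline{Q}(u))=\lambda f$, observes that the known solution $v$ at level $\mu\in(\lambda,\Lambda_f)$ also solves the $\overline{Q}$-problem with datum $\mu f$, and then tests the difference of the two weak formulations with $(\overline{Q}(u)-\overline{Q}(v))^{+}$ to conclude $u\le v\le\gamma<\sigma$, after which $\overline{Q}$ coincides with the full series on the relevant range and $u$ solves \rife{egl}. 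You instead exploit the linearity of the reduced problem \rife{ng}: since $w_0=Q_\infty(u_0)\in\huz$ solves the linear equation with datum $\lambda_0 f$ (the interchange of sum and gradient being licit precisely because $u_0\le\gamma_0<\sigma$, so that $\sum_m a_m m\gamma_0^{m-1}<\infty$), uniqueness and homogeneity identify the linear solution at level $\lambda$ as $\tfrac{\lambda}{\lambda_0}w_0$, which sits \emph{strictly} below $Q_\infty(\gamma_0)$, and inverting $Q_\infty$ yields the strict bound $u\le\gamma<\sigma$ explicitly. Your scaling trick buys a shorter and more explicit proof --- a closed formula $u=Q_\infty^{-1}\bigl(\tfrac{\lambda}{\lambda_0}Q_\infty(u_0)\bigr)$, no auxiliary truncation; in fact your detour through the monotone approximation $u_n=Q_n^{-1}(v)$ is dispensable, since once $v\le\tfrac{\lambda}{\lambda_0}Q_\infty(\gamma_0)$ is known you could define $u:=Q_\infty^{-1}(v)$ outright and verify directly, as you do in the last step, that it is a weak solution. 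What the paper's approach buys in exchange is robustness: the comparison argument is a sub/supersolution mechanism that does not use the $1$-homogeneity of $w\mapsto-\div(A(x)\nabla w)$ and would survive a nonlinear principal part or lower-order perturbations, where your scaling step breaks down; it also exhibits directly the monotone dependence of solutions on $\lambda$. One stylistic remark: your identification of the limit in $Q_n(u_n)=v$ is stated tersely, but it is easily justified by monotone convergence ($v\ge Q_n(u)\uparrow Q_\infty(u)$ gives $u\le Q_\infty^{-1}(v)$, while $Q_\infty(u_n)\ge Q_n(u_n)=v$ gives the reverse inequality), so no gap remains.
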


\begin{proof}
Let us fix $\lambda <\Lambda_f$.  We want to prove that a solution does exist. By definition there exists a solution $v$ of problem \rife{egl}  
with datum  $\mu\in (\lambda,\Lambda_f)$ with  $v\leq \gamma$.
We consider the following problem
\be\label{quel}
\begin{cases}
\dys -\div(A(x)\nabla  \overline{Q} (u) )= \lambda f &\text{in}\ \Omega \\
u=0 &\ \text{on}\  \partial\Omega\,, 
\end{cases}
\ee
where
$$\overline{Q}(s)=
\begin{cases}
\dys \sum_{m=1}^{\infty} a_m s^m &\text{if}\ s\leq \gamma \\
\dys  s \sum_{m=1}^{\infty} a_m \gamma^m &\ \text{if}\ s>\gamma. 
\end{cases}
$$
The solution $u$ does exist since  $\overline{Q}(s)$ is increasing and surjective on $\re^{+}$. 
Now, since $v\leq \gamma$, then $v$ turns out to solve problem \rife{quel} with  $\mu f$ as datum. 
Therefore, we can use  $(\overline{Q}(u)-\overline{Q}(v))^+$ as test function in the problems 
solved by $u$ and $v$ respectively, and then we subtract the second from the first one to obtain,
 using also \rife{a1}
$$
\alpha \int_{\{\overline{Q}(u)\geq \overline{Q}(v)\}} |\nabla (\overline{Q}(u)-\overline{Q}(v)) |^2\leq 0,
$$
that implies 
 $\overline{Q}(u)\leq \overline{Q}(v)$ a.e. on $\Omega$. Thus, since $\overline{Q}$ increases, 
 $u\leq v \leq \gamma$ and we conclude, since, thanks to the definition of $\overline{Q}$,
  this is equivalent  for $u$ to solve \rife{egl} .
\end{proof}

An interesting question related to the previous result is the following  
\begin{op}
What happens if $\lambda\to\Lambda_f $? Does a solution exist for problem \rife{egl} with datum $\Lambda_f f $ ?
In which sense?
\end{op}

\medskip

\subsection{Existence in the general case}\label{sub}

Here we complete the proof of Theorem  \ref{th} by giving the following

\begin{proof}[Proof of Theorem \ref{esiste}]

As before,  for $ s\geq 0$, we define
$$
Q_n(s):= \sum_{m=1}^{n} a_m s^m, 
$$
and we consider the weak solutions of  problem \rife{ng}
which exists, as before,  since $Q_n$ is strictly increasing for $s>0$ and surjective on $\re^+$. \bk

Using again a  Stampacchia type argument (see \cite{s}) we readily have 
$$
\|Q_n (u_n)\|_{\lio}\leq c\|f\|_{L^q(\Omega)}\,,
$$
for a fixed constant $c>0$. 
Moreover, thanks to Proposition \ref{stime} we know that 
 $$
 \|u_n\|_{\huz}\leq \overline{c}\|f\|_{L^q(\Omega)},
 $$
and there exists $u\in\huz$ such that $u_n$ weakly converges to $u$ in $\huz$, $u_n\to u $ a.e. on $\Omega$ with  $\|u\|_{\lio}\leq \sigma$.

To simplify the notation, let us define $\lambda\equiv\|f\|_{L^q(\Omega)}$. Moreover,
 since $Q_n$ is strictly increasing  we have
\begin{equation}\label{418}
\|u_n\|_{\lio}\leq Q^{-1}_n(c\lambda)\,.
\end{equation}

Our aim is to prove that, for fixed $f$ (and so $\lambda$),  there exists $0<\gamma<\sigma$ such that $Q_{n}^{-1}(c\lambda)\leq \gamma$ for $n$ large enough. In fact, it is enough to choose $\gamma$ such that
$$
\sum_{m=1}^{\infty}a_m \gamma^{m}> c\lambda\,,
$$
 that is possible by Abel's theorem since $a_m$ are nonnegative real numbers, $\forall m>0$. \bk 
Since $Q_{n}(\gamma)$ converges to $\sum_{m=1}^{\infty}a_m \gamma^{m}$, there exists $\overline{n}$ large enough such that $Q_{n}(\gamma) > c\lambda$ for every $n \geq \overline{n}$, and so
$Q_{n}^{-1}(c\lambda) < \gamma$ for $n \geq \overline{n}$.

Now, thanks to \rife{418}, it is easy to pass to the limit and to conclude as in the proof  of Theorem \ref{piccolo}.
\bk 
\end{proof}

\salta{
\begin{remark}
Another interesting (and open) question is the following: 
can we prove existence of a solution (even a distributional solution) if the assumptions on the
 regularity of the datum is weakened (e.g. $f\in \elle1$ or even a measure) ?  
\end{remark}
}

\section{Approximating sequences} 
In this last section  we want to give some further remarks on what happens to the
 approximating solutions when existence does not hold. 
 
Consider the approximating problems \rife{ng}, that is, once we define as before $Q_n (u_n)=v$ 
\begin{equation}\label{ega}
\begin{cases}
\dys -\sum_{m=1}^{n}  \div (A_m (x) \nabla u_n^m)= f &\text{in}\ \Omega \\
u_n=0 & \text{on}\ \partial \Omega\,.
\end{cases}
\end{equation}

As we already noticed, the  case $\sigma=0$ yields trivially nonexistence by Definition \ref{def}, since the only
 possible function satisfying \rife{defe} is $u=0$.  
We want to strenghten this fact by observing that the approximating sequences of 
solutions of problems \rife{ega} actually converges to zero as $n$ goes to infinity. The proof of this fact
 being already contained in the proof of Proposition \ref{stime}, since, for any $\delta>0$,  we got 
$$
\mis(\{u_n\geq \delta\})\leq \frac{C n }{\delta^n a_n}\to 0, 
$$
as $n\to+\infty$, an so 
$$
\mis(\{u\geq \delta\})=0, 
$$
for any $\delta>0$. That is $u=0$.  

\medskip 
So let us focus on the case $\sigma>0$ in which
\be\label{524}
\sum_{m=1}^\infty a_m \sigma^m<+\infty
\ee 
is in force.  As pointed out in  Example $3.1$ of \cite{o} (see also \cite{blop} where a finer analysis
concerning singular quasilinear equations is developed) some singular problems turn out to develop 
\emph{generalized solutions} obtained as limit of approximating sequences with \emph{flat zones} of positive measure 
in 
which they are constant and equal to $\sigma$ (where $\sigma$ is the singular point of the problem).
 For the sake of exposition, let us explain what happens with a constant datum $\lambda\in\re$. 
If $v$ solves
$$
\begin{cases}
 -\div (A(x) \nabla v)= \lambda &\text{in}\ \Omega \\
v=0 &\text{on}\ \partial \Omega\,,
\end{cases}
$$
 we consider, as before, the change of variable $v=Q_n (u_n)$ where $Q_n$ is defined as in \rife{qn}. 
It is always possible to choose $\lambda$ large enough such that 
$$
\mis(\{v>K\})>0, 
$$
where $K$ is as in \rife{kappa}. 
It is easy to see that, by definition, on the set $\{v>K\}$, $u_n$ converges a.e. to $\sigma$ and so
 the limit $u$ of the approximating solutions turns out to have a flat zone of positive measure.

\salta{
\subsection{Existence with less regular data} 
In this last part we want to give an idea on  how our results can be applied to prove existence 
of solutions with less regular data, a function $f\in\elle1$ (or even a measure $\mu$).  In this case 
 there is no reason to expect $u\leq \gamma<\sigma$.  Here the key fact is that $\mis(\{u=\sigma\})=0$. 
 Once we prove this, then we can reason as in the proof of  Theorem 1.2 in \cite{o} to obtain the existence of a solution.
 
  To fix the ideas let 
 $f\in \elle1$ a nonnegative function and let $a_m$ be a sequence as before such that \rife{524} is not satisfied.
  Moreover here we suppose $N\geq2$. 
 
 We want to show that there exists a distributional solution to problem 
\begin{equation}\label{mainl1}
\begin{cases}
\dys -\sum_{m=1}^{\infty}  \div (A_m (x) \nabla u^m)= f &\text{in}\ \Omega \\
u=0 & \text{on}\ \partial \Omega,
\end{cases}
\end{equation}
that is a function $u\in \huz \cap L^{\infty}(\Omega)$ such that 
$$
\sum_{m=1}^{\infty}\into A_m (x)\nabla u^m \nabla\varphi =\into f \varphi, 
$$
for any $\varphi \in C_0^{\infty}(\Omega)$. 

We have the following
\begin{theorem}
Let $f\in \elle1$ be a nonnegative function and $a_m$ a sequence as
 before such that $\sum_{m=1}^{\infty}a_m \sigma^m =\infty$.  Then there exists a distributional 
 solution for problem \rife{mainl1}.
\end{theorem}

The agrument follows the idea of the proof of Theorem 1.2 in \cite{o} so we will only scketch by emphasizing the main 
 diffenences.  Let $f_n$ a sequence of nonnegative functions in $\lio$ such that $f_n$ 
converges to $f$ in $\elle1$ (for instance the truncation of $f$ at level $n$), and consider the sequence of approximating problems 
\begin{equation}\label{mainl1n}
\begin{cases}
\dys -\sum_{m=1}^{\infty}  \div (A_m (x) \nabla u_n^m)= f_n &\text{in}\ \Omega \\
u_n=0 & \text{on}\ \partial \Omega.
\end{cases}
\end{equation}
We know that a solution $u_n$ to problem \rife{mainl1n} does exists thanks to Theorem \ref{esiste}.  
 Moreover $0\leq u_n\leq t_n<\sigma$, for suitable positive numbers $t_n$. So that, using $u_n$ as test function in 
  the weak formulation of \rife{mainl1n}, and reasoning as before, we easly get
  $$
  \alpha a_1 \into |\nabla u |^2 \leq \sigma\|f_n \|_{\elle1},
  $$
so that there exists a function $u\in \huz$ such that (up tu subsequences) $u_n$ converges
 to $u$ both weakly in $\huz$  and a.e. on $\Omega$. Moreover $0\leq u \leq \sigma$. 
 
On the other hand if 
$$
Q(s):=\sum_{m=1}^{\infty} a_m s^m\,,
$$ 
then, since $u_n\leq t_n<\sigma$, it is not difficult to see that $Q(u_n)$ is a well defined function in 
$\huz\cap L^{\infty}(\Omega)$. So, if $T_k (s) := \min(k,s)$, for a positive number $k$,  we can
 choose $T_k (Q(u_n))$ as test function in \rife{mainl1n} to deduce
 $$
 \into |\nabla T_k (Q(u_n)) |^2 \leq Ck,
 $$
and since, $Q(u_n)$ converges a.e. to $Q(u)$ we get 
 $$
 \into |\nabla T_k (Q(u)) |^2 \leq Ck,
 $$
that implies, by standard technuques, that $Q(u)\in W^{1,q}_0$, for any $q<\frac{N}{N-1}$. An easy 
consequence of this fact is that
$$
\mis(\{u=\sigma\})=0. 
$$
 
 Now, starting from this property of $u$ we can reason exactly as in the proof of Theorem 1.2 in \cite{o}, 
 to prove that, as $n$ goes to infinity  
$$
Q(u_n) \longrightarrow Q(u) \ \text{in}\   W^{1,q}_0, \ \text{for any}\  q<\frac{N}{N-1}. 
$$
This latter result allow us to pass to the limit in 
}

\end{document}